\newcommand{\R}{\mathbb{R}}
\newcommand{\N}{\mathbb{N}}
\newcommand{\calS}{\mathcal{S}}
\newcommand{\eps}{\varepsilon}
\DeclareMathOperator*{\argmin}{argmin}
\DeclareMathOperator{\opspan}{span}
\newcommand{\norm}[1]{{\left\| #1\right\|}}
\newcommand{\normsq}[1]{\norm{#1}^2}
\newtheorem{definition}{Definition}
\newtheorem{theorem}{Theorem}
\newtheorem{lemma}{Lemma}
\newtheorem{assumption}{Assumption}
\author[1]{Erin George$^\S$}
\author[1]{Yotam Yaniv$^\S$}
\author[1]{Deanna Needell}
\affil[1]{University of California, Los Angeles, Department of Mathematics}
\affil[ ]{\texttt {\{egeo, yotamya, deanna\}@math.ucla.edu}}
\date{\vspace{-5ex}}
\begin{document}

\title{Multi-Randomized Kaczmarz for Latent Class Regression}

\maketitle

\def\thefootnote{\S}\footnotetext{These authors contributed equally to this work}

\begin{abstract}
Linear regression is effective at identifying interpretable trends in a data set, but averages out potentially different effects on subgroups within data.  We propose an iterative algorithm based on the randomized Kaczmarz (RK) method to automatically identify subgroups in data and perform linear regression on these groups simultaneously.  We prove almost sure convergence for this method, as well as linear convergence in expectation under certain conditions.  The result is an interpretable collection of different weight vectors for the regressor variables that capture the different trends within data.  Furthermore, we experimentally validate our convergence results by demonstrating the method can successfully identify two trends within simulated data.
\end{abstract}

\section{Introduction}
Often, one needs to perform regression tasks on extremely large-scale data. Methods such as the randomized Kaczmarz method (RK) \cite{K37:Angena,strohmer2009randomized} have gained recent attention for their ability to solve such systems with needing to only access a single row at a time rather than the full system in memory. However, in many settings, two or more population subgroups may be present in the data requiring multiple regressors. Often times, computing a single regressor will result in a minority group having far worse predictive power than the majority. Additionally, the minority group is not known a priori requiring that we both discover and regress on these subgroups on the fly.
Here, we present a variant of RK that addresses this problem via multiple regressors.

Formally, given multiple consistent systems of equations $M^{(i)} x^{(i)}_* = b^{(i)}$, $i \in \{0, 1, \dots, n\}$ we consider the combined matrices
\begin{equation*}
M = \begin{bmatrix}
M^{(0)} \\
M^{(1)} \\
\vdots \\
M^{(n)}
\end{bmatrix}~
b = \begin{bmatrix}
b^{(0)} \\
b^{(1)} \\
\vdots \\
b^{(n)}
\end{bmatrix} 
\end{equation*} with the goal of recovering $x^{(0)}_*, x^{(1)}_*, \dots, x^{(n)}_*$  where the rows of these matrices may be shuffled. 
Next we define the class of a set of rows and right hand side entries. 
\begin{definition}[Class]
    Given a regressor $x^{(i)}_*$ a set of rows resulting in matrix $M^{(j)}$ and right hand sides $b^{(j)}$ are in class $i$ if $M^{(j)}x^{(i)}_* = b^{(j)}$.
\end{definition}
We assume that the class of each row is not known beforehand. This task corresponds with uncovering multiple systems and their solutions. In the statistics literature, this problem can be framed as latent class linear regression where each class represents an overdetermined system of equations \cite{wedel1994review, magidson2004latent}. Classically, this problem can be solved by using an expectation-maximization (EM) algorithm to iteratively fit the regressor coefficients and then classify the rows~\cite{moon1996expectation, klusowski2019estimating}. The EM algorithm has been extensively studied in a statistics framework with convergence properties discussed in \cite{wu1983convergence} and \cite{dempster1977maximum}. More recently the EM framework has been used to learn class data representations in unsupervised machine learning using neural networks \cite{greff2017neural}.

We take a randomized numerical linear algebra approach to this problem by modifying the classical randomized Kaczmarz algorithm to this setting. This approach allows us to process very large data sets while only accessing single rows of our data set at a time. 

\section{Multi-Randomized Kaczmarz Method} 

We propose a novel iterative method motivated by the randomized Kaczmarz (RK) algorithm for simultaneously solving all $n+1$ systems, Algorithm~\ref{alg:mrk}. This approach is motivated by the assumption that the closer an iterate is to a hyperplane defined by a row of the combined system, the more likely that row belongs to the class of that iterate. Since Kaczmarz methods converge monotonically this is a reasonable assumption.

At each iteration, the multi-randomized Kaczmarz (MRK) method selects a hyperplane as in the standard RK algorithm. Then the Kaczmarz update for all iterates is computed. The update with the smallest magnitude is selected, denoted $s_k$, with the respective magnitude denoted as $c_{s_k}$. Given a swap probability $r$ we then update iterate $t_k$ chosen to be $s_k$ with probability $1-r$ and $t_k$ chosen from all iterates uniformly at random with total probability $r$. The selected iterate $t_k$ is updated by the magnitude $c_{s_k}$ in the direction the $t_k$-th iterate would have been updated given the standard Kaczmarz update.   

\begin{algorithm}
\caption{Multi-Randomized Kaczmarz (MRK) Algorithm}\label{alg:mrk}
\begin{algorithmic}
\State \textbf{Input:} System $M$, right hand side $b$, number of iterations $N$, initial iterates $x_0^{(0)}$, $x_0^{(1)}, \dots, x_0^{(n)}$, swap probability $r$, sampling distribution $\mathcal{D}$.
\For{$k$ from 0 to $N-1$}
\State Sample row $i_k \sim \mathcal{D}$
\State $c_{i,k} = \frac{M_{i_k}x_k^{(i)} - b_{i_k}}{||M_{i_k}||^2}$, $i = 0, 1, \dots, n$ 
\State $s_k = \argmin_{i \in  \{0, 1, \dots, n\}}(|c_{i,k}|)$ 
\State $t_k = \begin{cases}s_k &\text{ with probability $1-r$}\\ t&\text{ with probability $\tfrac{r}{n+1}$ for all $t\in\{0,\ldots,n\}$}\end{cases}$
\\\Comment The total probability that $t_k = s_k$ is $1-r+\tfrac{r}{n+1}$.
\State $x_{k+1}^{(t_k)} = x_{k}^{(t_k)} - |c_{s_k}|\operatorname{sgn}(c_{t_k}) M_{i_k}^T$ 
\State$x_{k+1}^{(j)} = x_{k}^{(j)}, j \neq t_k$
\EndFor
\end{algorithmic}
\end{algorithm}

We state two convergence results for this method, which we will prove in the following section. The first theorem, \Cref{thm:cond_exp_conv}, proves a linear convergence result for the MRK algorithm in expectation under certain conditions. The second theorem, \Cref{thm:almost_sure}, is an almost sure convergence result for the MRK algorithm. Other almost sure convergence results have been shown for Kaczmarz type algorithms~\cite{chen2012almost} under the assumption that measurements (rows of the matrix) are drawn from independent but not necessarily identical distributions.

To prove these theorems, we will make a uniqueness assumption on the problem.
\begin{assumption}
The solution to the set of systems is unique up to relabeling.  That is, suppose there are $x_i$, $i \in \{0,1,\ldots,n\}$ so that for each row in the combined system (indexed by $k$) there is $i_k$ where
\[M_k x_{i_k} = b_k.\]
Then there is a permutation $\sigma$ on $\{0,1,\ldots,n\}$ so that $x_*^{(i)} = x_{\sigma(i)}$ for all $i$.\label{asm:unique}
\end{assumption}
In particular, this means all systems in the problem are full rank, even if rows which consistently belong to two or more classes are removed.
\begin{theorem}[Conditional expected MRK Convergence] \label{thm:cond_exp_conv} Define
\[e_k = \sum_{i=0}^n \normsq{x_k^{(i)} - x^{(i)}_*}.\]
Let $r \geq 0$ be sufficiently small.  Choose $c \in (C_0, 1)$ and $\delta > 0$.  There exists $\varepsilon > 0$ so that if $e_k < \varepsilon$ then
\begin{equation*}
    \mathbb{E}(e_{k+b}| A_\delta) \leq c^b e_k
\end{equation*}
where $A_\delta$ is an event that happens with probability at least $1-\delta$.  The constant $C_0 < 1$ depends on $M$, $b$, $n$, and $r$.
\end{theorem}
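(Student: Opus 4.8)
The plan is to exploit that once $e_k$ is small every iterate sits near its own target, so the sampled hyperplane is attributed to a class it is genuinely consistent with; the $t_k=s_k$ update is then \emph{exactly} a randomized Kaczmarz step on that subsystem, while the ``swap'' updates are second-order perturbations. Concretely I would: (1) turn ``correct identification'' into a quantitative statement valid while $e_k<\varepsilon_1$ for a threshold $\varepsilon_1$; (2) prove a one-step expected contraction conditioned on $t_k=s_k$; (3) bound the increase of $e_k$ caused by a swap; (4) combine (2)--(3) into $\E(e_{k+1}\mid\mathcal F_k)\le C_0 e_k$ on $\{e_k<\varepsilon_1\}$ (with $\mathcal F_k$ the step-$k$ history) and iterate $b$ times via a stopping-time argument keeping the process inside $\{e<\varepsilon_1\}$ with probability at least $1-\delta$; that event is the $A_\delta$ of the statement.

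For (1): if a row $M_\ell$ is consistent only with class $j$, then $M_\ell(x_*^{(t)}-x_*^{(j)})\ne 0$ for every $t\ne j$, so the separation $\gamma:=\min |M_\ell(x_*^{(t)}-x_*^{(j)})|/\normsq{M_\ell}>0$, the minimum over all such $\ell$ and all $t\ne j$; expanding $c_{i,k}$ and using $M_\ell x_*^{(i)}=b_\ell$ on the consistent classes shows that for $e_k$ small enough (in terms of $\gamma$ and the row norms) the minimizer $s_k$ always indexes a class consistent with $i_k$, and the unique such class when $i_k$ belongs to only one. For (2): the event $\{t_k=s_k\}$ has probability $1-\tfrac{rn}{n+1}$ independently of $i_k$, and on it only $x^{(s_k)}$ moves, by the exact Kaczmarz step (the row being consistent with class $s_k$), so
\[
\normsq{x_{k+1}^{(s_k)}-x_*^{(s_k)}}=\normsq{x_{k}^{(s_k)}-x_*^{(s_k)}}-\frac{\big(M_{i_k}(x_{k}^{(s_k)}-x_*^{(s_k)})\big)^2}{\normsq{M_{i_k}}}.
\]
Averaging over $i_k\sim\calD$, grouping rows by class, and invoking \Cref{asm:unique} --- which forces the rows of each class to span the full space even after deleting rows consistent with several classes --- gives a constant $\mu>0$ with $\E(e_{k+1}\mid\mathcal F_k,\,t_k=s_k)\le(1-\mu)e_k$.

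For (3): on $\{t_k\ne s_k\}$ only $x^{(t_k)}$ moves, by $-|c_{s_k,k}|\sgn(c_{t_k,k})M_{i_k}^{T}$; since $s_k$ is an argmin, $|c_{s_k,k}|\le |c_{j,k}|\le \norm{x_k^{(j)}-x_*^{(j)}}/\norm{M_{i_k}}$ for any class $j$ consistent with $i_k$, so the step length is $O(\sqrt{e_k})$ and, expanding the square, $|e_{k+1}-e_k|\le C'' e_k$ with $C''$ depending only on $M$. Hence on $\{e_k<\varepsilon_1\}$, $\E(e_{k+1}\mid\mathcal F_k)\le C_0 e_k$ with $C_0:=1-\big(1-\tfrac{rn}{n+1}\big)\mu+\tfrac{rn}{n+1}C''$, which is $<1$ exactly when $r$ is small enough that $\tfrac{rn}{n+1}(\mu+C'')<\mu$. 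For (4) I would set $\tau:=\min\{j\ge0:e_{k+j}\ge\varepsilon_1\}$; then $C_0^{-(j\wedge\tau)}e_{k+(j\wedge\tau)}$ is a nonnegative supermartingale, so optional stopping yields $\E(e_{k+b}\mathbf 1_{\{\tau>b\}})\le C_0^{\,b}e_k$ and Markov yields $\Pr(\tau\le b)\le e_k/\varepsilon_1$. Taking $\varepsilon:=\delta\varepsilon_1$ and $A_\delta:=\{\tau>b\}$ gives $\Pr(A_\delta)\ge1-\delta$ and $\E(e_{k+b}\mid A_\delta)\le C_0^{\,b}e_k/(1-\delta)\le c^{\,b}e_k$ for any prescribed $c\in(C_0,1)$, after shrinking $\delta$ (and hence $\varepsilon$) if necessary, which only enlarges $A_\delta$.

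The step I expect to be the real obstacle is the interaction of (3) and (4): the bound $|c_{s_k,k}|=O(\sqrt{e_k})$ that makes the swap term second order is itself a consequence of correct identification, which holds only while $e_k<\varepsilon_1$, so the contraction inequality and the hypothesis licensing it are entangled; the stopping time $\tau$ is the device that breaks this, but making the constants fit --- in particular extracting a $C_0$ with the claimed dependence on $M,b,n,r$ --- takes care. A secondary nuisance is the ``ambiguous'' rows consistent with two or more classes: they never spoil the non-increasing part of the no-swap step, but they must be dropped from the averaging that produces $\mu>0$, which is precisely why \Cref{asm:unique} is phrased so as to keep each class full rank after their removal.
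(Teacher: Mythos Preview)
Your proposal is correct and follows essentially the same strategy as the paper: use the separation $\gamma>0$ of the solutions to force correct identification once $e_k$ is small, obtain a one-step contraction from the randomized-Kaczmarz bound on the $t_k=s_k$ branch, bound the swap branch by $O(e_k)$, and then control the probability of leaving the basin via Markov/maximal inequalities. The packaging differs slightly---the paper tracks the vector of individual squared errors and bounds the $\ell^1$ operator norm of a transition matrix $\mathbf A$, while you work directly with the scalar $e_k$ and use a supermartingale/stopping-time argument in place of a union bound over iterations---but the ideas and the resulting constants are the same.
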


This theorem shows the convergence will be linear in expected squared error after a certain point.  Limiting the initial squared error before convergence allows us to identify which solution each iterate is converging towards.  The failure probability reflects the possibility that the iterates may still converge towards a different labeling of the solutions and iterates.  In the case where the initial squared error is too large or the failure probability is triggered, we will still see convergence, as shown in the next theorem.

\begin{theorem}[Almost sure MRK Convergence]
\label{thm:almost_sure}
There is $r' \in (0,1)$ so that if $r \in (0,r')$, each iterate of the algorithm converges almost surely to a different solution of the subsystems.
\end{theorem}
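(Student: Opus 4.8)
The plan is to combine the local-in-expectation estimate of \Cref{thm:cond_exp_conv} with two standard probabilistic devices: the supermartingale convergence theorem, to upgrade a conditional-expectation contraction into almost-sure convergence, and a (conditional) Borel--Cantelli-type recurrence argument, to guarantee that the iterates eventually enter the region where \Cref{thm:cond_exp_conv} applies, no matter where they start. Throughout, a \emph{labeling} is a permutation $\sigma$ of $\{0,\ldots,n\}$; for such a $\sigma$ write $e_k^\sigma = \sum_{i=0}^n \normsq{x_k^{(i)} - x_*^{(\sigma(i))}}$, and let $(\mathcal F_k)$ be the natural filtration of the algorithm. By \Cref{asm:unique} the vectors $x_*^{(\sigma(i))}$ are the only possible limit points, so it suffices to show that $e_k^\sigma \to 0$ almost surely for some (random) labeling $\sigma$.

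First I would treat the local regime. Fix the data $\varepsilon$, $c\in(C_0,1)$, $\delta$, and block length $b$ produced by \Cref{thm:cond_exp_conv}, and suppose that at some time $k_0$ we have $e_{k_0}^\sigma < \varepsilon$ for some $\sigma$; after permuting the labels of the iterates we may assume $\sigma=\mathrm{id}$, so $e_k^\sigma$ is the $e_k$ of \Cref{thm:cond_exp_conv}. I would first show that, on an event $A$ of probability at least $1-\delta$, one has $e_k < \varepsilon$ for all $k\ge k_0$, by applying \Cref{thm:cond_exp_conv} to the process stopped at the first time $e$ exceeds $\varepsilon$ and bounding the escape probability with a maximal inequality for nonnegative supermartingales. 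On $A$ the hypothesis $e_k<\varepsilon$ then holds at every step, so $\E\big(e_{(m+1)b+k_0}\mid\mathcal F_{mb+k_0}\big)\le c^{b}\,e_{mb+k_0}$; hence $\big(e_{mb+k_0}\big)_{m\ge0}$ is a nonnegative supermartingale, which converges almost surely, and since $\E\,e_{mb+k_0}\le c^{mb}\varepsilon\to0$ the limit is $0$. Interpolating over the $b$ intermediate indices is harmless because when $e_k<\varepsilon$ each update moves an iterate by at most $|c_{s_k}|\norm{M_{i_k}}\le\sqrt{\varepsilon}$. Shrinking $\varepsilon$ makes $\delta$, and hence the probability of ever leaving the basin, as small as we wish.

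Next I would handle an arbitrary start by showing the iterates reach this basin. The key claim is that there are a bounded set $B$ (depending on $M$, $b$, $n$, $r$) and a constant $p_0>0$ such that, whenever the current state lies in $B$, with probability at least $p_0$ the process enters $\{\exists\,\sigma:\ e^\sigma<\varepsilon\}$ within a fixed number $L$ of steps. To build such an event one uses \Cref{asm:unique}: since each subsystem is full rank, for every class there is a finite list of rows whose hyperplanes meet in exactly that class's solution, so drawing those rows in succession (an event of positive probability under $\mathcal D$) while the update rule projects the appropriate iterate drives that iterate to within $\varepsilon$ of its target, and one checks the $\argmin$/swap mechanism can be made to cooperate over this finite horizon with probability bounded below in terms of $r$ and the problem data. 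Coupling this with a Foster--Lyapunov-type drift estimate guaranteeing that $\min_\sigma e_k^\sigma$ keeps the process in $B$ infinitely often (or has already converged), the conditional Borel--Cantelli lemma applied to the resulting sequence of ``attempts'' forces the basin to be entered; the local analysis then yields $e_k^\sigma\to0$ a.s., and distinctness of the limits is immediate from \Cref{asm:unique}. Taking $r'$ small keeps $C_0<1$ in \Cref{thm:cond_exp_conv} and keeps the adversarial swap increases controlled.

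The step I expect to be the real obstacle is this global ``reach the basin'' argument. Unlike ordinary randomized Kaczmarz, a single bad choice of $s_k$ --- the iterate closest to the sampled hyperplane need not belong to that row's class --- can push an iterate away from every target, so one must establish both (i) that the iterates cannot escape to infinity, a drift bound for $\min_\sigma e_k^\sigma$ that survives the $O(r)$ adversarial increases and the large step sizes possible when iterates are far apart, and (ii) a lower bound on the probability of the correct cascade of projections that is uniform over $B$. Making these two estimates quantitative, and thereby pinning down how small $r'$ must be, is where the work concentrates; given \Cref{thm:cond_exp_conv}, the local part is comparatively routine.
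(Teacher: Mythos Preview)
Your high-level scaffold --- local contraction via \Cref{thm:cond_exp_conv}, plus a global recurrence argument showing the iterates eventually enter that basin --- matches the paper's. The technical devices you propose for the global part, however, are genuinely different from what the paper does, and the paper's choices are worth knowing.

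For boundedness you sketch a Foster--Lyapunov drift estimate on $\min_\sigma e_k^\sigma$. The paper avoids drift entirely. It observes that the MRK update $x_{k+1}^{(t_k)} = x_k^{(t_k)} - |c_{s_k,k}|\sgn(c_{t_k,k})M_{i_k}^T$ always lies on the segment between $x_k^{(t_k)}$ and its \emph{full} Kaczmarz projection onto the sampled hyperplane (since $|c_{s_k,k}|\le|c_{t_k,k}|$). Hence, by convexity, the worst-case norm after any finite run of MRK is dominated by the worst-case norm over runs of ordinary Kaczmarz, and the latter is bounded uniformly by a purely algebraic lemma (\Cref{lem:bdd_op_norms}) on compositions of single-row projections. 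This gives an a~priori bound $D$ independent of $r$ and of the randomness --- no expectation, no supermartingale. Your drift route may be workable, but you correctly flag that single steps can be large and ``bad'' without triggering the swap, so controlling the expected increment is delicate; the paper's geometric argument sidesteps this completely.

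For entering the basin you propose a cascade of specific row draws that project each iterate near its target. The paper instead uses compactness: on the set $C=\{\varepsilon/(n{+}1)\le\normsq{x^{(i)}-x_*^{(i)}}\le D^2\ \forall i\}$, the continuous function $g(x^{(0)},\ldots,x^{(n)})=\max_\ell\min_i |M_\ell x^{(i)}-b_\ell|/\norm{M_\ell}$ attains a positive minimum $c$. Thus from any state in $C$ there is always a row whose update has norm at least $c$, and via the swap mechanism that update hits the \emph{correct} system with probability at least $r/(n{+}1)$, reducing the corresponding squared error by at least $c^2$. A fixed number of such good steps (probability bounded below uniformly) drives $e_k$ below $\varepsilon$. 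This yields the uniform-over-$B$ lower bound you were after without constructing an explicit projection sequence.

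In short: your outline is sound and your local analysis is, if anything, more carefully formalized than the paper's; but the two substantive global ingredients --- uniform boundedness and uniform basin-entry probability --- are handled by cleaner, more geometric arguments in the paper than the drift/cascade machinery you envisage.
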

The convergence rate given by the proof of this theorem is slow.  In practice, we find the convergence rate quickly achieves the linear rate given in the previous theorem.

\section{Proofs}

\subsection{Conditional Convergence in Expectation}

\begin{proof}[Proof of Theorem~\ref{thm:cond_exp_conv}]

At the $k$-th iteration, we select a row from a system.  Suppose we select a row $\ell_k$ from system $i$.  There are three possibilities for how we update.
\begin{enumerate}
    \item[(a)] We update $x^{(i)}_k$ fully, setting
    \[\normsq{x^{(i)}_{k+1} - x^{(i)}_*} = C^{(i)}_k \normsq{x^{(i)}_k - x^{(i)}_*}\]
    for some random variable $C^{(i)}_k$ taking value in the range $[0,1]$.  The expectation of $C^{(i)}_k$ is just the Kaczmarz constant for the subset of rows which we are allowed to make a full and correct update with.
    \item[(b)] We update $x^{(i)}_k$ partially.  We bound the error here as
    \[\norm{x^{(i)}_{k+1} - x^{(i)}_*} \leq \norm{x^{(i)}_k - x^{(i)}_*}.\]
    \item[(c)] We update $x^{(j)}_k$ for some $j \neq i$.  Regardless of how this happens, we always update by a magnitude bounded above in norm by the correct update:
    \[\frac{\left|M_{\ell_k} x^{(i)}_k - b_{\ell_k}\right|}{\norm{M_{\ell_k}}} \leq \norm{x^{(i)}_k - x_*^{(i)}}.\]
    Therefore the new error satisfies
    \[\norm{x^{(j)}_{k+1} - x^{(j)}_*} \leq \norm{x^{(j)}_k - x^{(j)}_*} + \norm{x^{(i)}_k - x^{(i)}_*}\]
    and by Cauchy-Schwarz and Young's inequality
    \[\normsq{x^{(j)}_{k+1} - x^{(j)}_*} \leq 2\normsq{x^{(j)}_k - x^{(j)}_*} + 2\normsq{x^{(i)}_k - x^{(i)}_*}.\]
\end{enumerate}
There are two ways for us to land in case (c).  Either we trigger our swap probability and select iterate $j$, or we do not trigger our swap probability but we selected iterate $j$ anyway.  The second happens only when
\[\frac{\left|M_{\ell_k} x^{(j)}_k - b_{\ell_k}\right|}{\norm{M_{\ell_k}}} \leq \frac{\left|M_{\ell_k} x^{(i)}_k - b_{\ell_k}\right|}{\norm{M_{\ell_k}}}\]
We can bound the left side below by
\[\frac{|M_{\ell_k}(x_*^{(i)}-x_*^{(j)})|}{\norm{M_{\ell_k}}} - \norm{x_k^{(j)}-x_*^{(j)}}\]
and the right hand side above by
\[\norm{x^{(i)}_k - x_*^{(i)}}.\]
So this can only happen when
\begin{align*}
\frac{|M_{\ell_k}(x_*^{(i)}-x_*^{(j)})|}{\norm{M_{\ell_k}}} &\leq \norm{x^{(i)}_k - x_*^{(i)}} + \norm{x^{(j)}_k - x_*^{(j)}}\\
&\leq \sum_{a=0}^n \norm{x^{(a)}_k - x_*^{(a)}}\\ &\leq \sqrt{(n+1) \cdot e_k}.
\end{align*}
We only need to consider the case when $M_{\ell_k}(x_*^{(i)}-x_*^{(j)}) \neq 0$, as otherwise we could consider this row $\ell_k$ as coming from the $j$-th system anyway.  Therefore, the probability of this happening goes to 0 as $e_k$ goes to 0.  Suppose $\eps$ is small enough so that whenever $e_k < \eps$ the probability this mistake happens for any pair is less than $q$.

We will also assume that $\eps$ is small enough so that, assuming we do not trigger our swap probability, there is a full rank set of rows for each system so that whenever $e_k < \eps$ all these rows will make a correct update and that cannot be added to another system consistently.  This is a consequence of~\Cref{asm:unique}.  Then, for each system, the condition number for the set of rows that can make a correct update is bounded above, and the RK constant is bounded above by some value strictly less than 1.  Let $c < 1$ bound above the RK constant for each system.

Now, whenever $e_k < \eps$, we can bound
\[\begin{pmatrix}\normsq{x_{k+1}^{(0)}-x_*^{(0)}} \\\vdots\\ \normsq{x_{k+1}^{(n)}-x_*^{(n)}} \end{pmatrix} \leq \mathbf{A}\begin{pmatrix}\normsq{x_{k}^{(0)}-x_*^{(0)}} \\\vdots\\ \normsq{x_{k}^{(n)}-x_*^{(n)}} \end{pmatrix}\]
where $\leq$ is interpreted component-wise and
\begin{align*}
 \mathbf{A}_{ii} &= 1 + \frac{m_j}{m}(c-1)(1-q-\tfrac{nr}{n+1}) + \frac{m-m_j}{m}(q+\tfrac{r}{n+1}) \\
\mathbf{A}_{ij} &= 2\frac{m_j}{m}(q+\tfrac{r}{n+1}) \; \text{if $i\neq j$.}   
\end{align*}

Here $m_j$ is the number of rows in the $j$-th system and $m = \sum_j m_j$.

By induction,
\[\begin{pmatrix}\normsq{x_{k+b}^{(0)}-x_*^{(0)}} \\\vdots\\ \normsq{x_{k+b}^{(n)}-x_*^{(n)}} \end{pmatrix} \leq \mathbf{A}^b\begin{pmatrix}\normsq{x_{k}^{(0)}-x_*^{(0)}} \\\vdots\\ \normsq{x_{k}^{(n)}-x_*^{(n)}} \end{pmatrix}\]
for all $b \in \N$ provided that $e_{k+a} < \eps$ for all $a \in \{0,\ldots,b-1\}$.  We wish to show the $\ell_1$ operator norm of $\mathbf{A}$ is less than 1.  This happens when
\begin{align*}
    &\frac{m_j}{m}(c-1)\left(1-q-\frac{nr}{n+1}\right) \\&+ \left(q+\frac{r}{n+1}\right)\left(\frac{m-m_j}{m}+ 2n\frac{m_j}{m}\right)
\end{align*}
is negative.  This occurs when $q + \frac{nr}{n+1}$ is small enough.  So then, for $r$ sufficiently small, we can choose $\eps$ to make $\norm{\mathbf{A}}_{\ell^1\to\ell^1} = d < 1$.

Suppose $e_k < \delta < \eps$.  By our previous bound, $e_{k+b} < d^b \delta$, conditioned on the intermediate values $e_{k+a} < \eps$.  By Markov's inequality, the probability that $e_{k+a} \geq \eps$ is at most $\frac{1}{\eps}d^a \delta$. The total probability of this happening is at most $\frac{\delta}{\eps}\frac{d}{1-d}$.  Therefore our total error remains bounded above by $\eps$ with probability at least $1 - \frac{\delta}{\eps}\frac{d}{1-d}$, and in this case we have convergence in expectation.

\end{proof}

\subsection{Convergence with full probability}

An outline of the proof for Theorem~\ref{thm:almost_sure}:
\begin{enumerate}
    \item We use Theorem~\ref{thm:cond_exp_conv} to define ``convergence basins'': regions where, if the iterates fall into, there is some positive probability that they never escape and converge in expectation.
    \item We show $\norm{x_k^{(i)}-x_*^{(i)}}$ is bounded by some constant independent of $i$ and $k$.
    \item We show we can bound the probability of falling into a basin eventually below by some positive number.
\end{enumerate}

We will begin by proving the following lemma, which will be used in the second part of the outline above.
\begin{lemma}
Let $R$ be a sequence of rows from the problem.  The sequence of Kaczmarz updates corresponding to $R$ defines an affine transformation $v \mapsto T_R v + v_R$.  There are constants $c_r \in (0,1), B_r \in \R_+$ for $r \in \{1,\ldots,d\}$ so that $\norm{v_R} \leq B_{\dim\opspan R}$ and $\norm{T_R}_R \leq c_{\dim\opspan R}$, where $\norm{\cdot}_R$ is the $\ell^2$ operator norm when the operator is restricted to $\opspan R$.\label{lem:bdd_op_norms}
\end{lemma}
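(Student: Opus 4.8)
The plan is to make the affine structure of a Kaczmarz sweep explicit, localize everything to the span of the rows involved, and then push the uniformity through an induction on $r=\dim\opspan R$ together with a couple of reductions that exploit the finiteness of the row set. First I would record the shape of a single update: for a row $M_i$ with unit normal $\hat a=M_i^{T}/\norm{M_i}$ and $\hat b_i=b_i/\norm{M_i}$, the Kaczmarz update is the affine map $v\mapsto Q_i v+\hat b_i\hat a$ where $Q_i=I-\hat a\hat a^{T}$ is the orthogonal projection onto $M_i^{\perp}$; in particular it is nonexpansive, it moves only in the direction $M_i$, and it fixes every vector orthogonal to $M_i$. Composing along $R=(M_{i_1},\dots,M_{i_N})$ gives $T_R=Q_{i_N}\cdots Q_{i_1}$ and $v_R=\sum_{j=1}^{N}(Q_{i_N}\cdots Q_{i_{j+1}})\hat b_{i_j}\hat a_{i_j}$. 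The structural point is that $V:=\opspan R$ is invariant under each $Q_{i_j}$ (each only subtracts a multiple of $\hat a_{i_j}\in V$), so $T_R(V)\subseteq V$, $v_R\in V$, and the quantity $\norm{T_R}_R$ is just the operator norm of $T_R$ acting on the finite-dimensional space $V$; everything may therefore be done inside $V$.

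For the bound on $\norm{T_R}_R$ I would induct on $r$. Given $R$ spanning $V$ with $\dim V=r$, let $j^{\ast}$ be the first index for which $\opspan\{M_{i_1},\dots,M_{i_{j^{\ast}}}\}=V$ and write $R=R'\cdot(a)\cdot R''$ with $a=M_{i_{j^{\ast}}}$; then $V':=\opspan R'$ has dimension $r-1$ and $a\notin V'$. Since the updates from $R''$ are nonexpansive on $V$, $\norm{T_R}_R\le\norm{T_{R'\cdot(a)}}_R$. Decomposing $V=V'\oplus\R e$ with $e$ the unit vector orthogonal to $V'$, the updates from $R'$ act as the inductively controlled contraction $T_{R'}|_{V'}$ on $V'$ (operator norm $\le c_{r-1}$) and as the identity on $\R e$; applying $Q_a$ afterward strictly contracts the $\R e$-component by $\sqrt{1-s^{2}}$, where $s=|\langle\hat a,e\rangle|>0$ is the sine of the angle between $a$ and $V'$. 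Writing a unit $v=v'+v''$ with $v'\in V'$, $v''\in\R e$, the identity $\norm{Q_a w}^{2}=\norm{w}^{2}-\langle\hat a,w\rangle^{2}$ applied to $w=T_{R'}v'+v''$, together with AM--GM on the cross term, yields
\[
\norm{T_{R'\cdot(a)}v}^{2}\le 1-s^{2}\bigl(1-c_{r-1}^{2}\bigr).
\]
Because $M$ has only finitely many rows, only finitely many pairs $(a,V')$ can arise, so $s\ge s_{\min}>0$; setting $c_r:=\sqrt{1-s_{\min}^{2}(1-c_{r-1}^{2})}\in(0,1)$ (and $c_1:=1/2$, since a one-dimensional sweep annihilates its span) gives $\norm{T_R}_R\le c_r$ for every $R$ with $\dim\opspan R=r$.

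For the bound on $\norm{v_R}$ I would chop $R$ greedily into consecutive blocks $R=R_1R_2\cdots R_p$, where each $R_i$ with $i<p$ is the shortest $V$-spanning prefix of what remains of $R$ and $R_p$ is a possibly shorter leftover spanning a subspace of dimension at most $r-1$. From $F_R=F_{R_p}\circ\cdots\circ F_{R_1}$ one gets $v_R=\sum_{i=1}^{p}(T_{R_p}\cdots T_{R_{i+1}})v_{R_i}$, and since each of $T_{R_{i+1}},\dots,T_{R_{p-1}}$ is a contraction of $V$ with norm $\le c_r$, the $i$-th summand with $i<p$ has norm $\le c_r^{\,p-1-i}\norm{v_{R_i}}$. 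Each spanning block splits as $R_i=R_i'\cdot(a)$ with $\opspan R_i'$ of dimension $r-1$, so $v_{R_i}=Q_a v_{R_i'}+\hat b\hat a$ and $\norm{v_{R_i}}\le\norm{v_{R_i'}}+\beta\le B_{r-1}+\beta$ by the dimension induction, where $\beta:=\max_i|b_i|/\norm{M_i}$; likewise $\norm{v_{R_p}}\le B_{r-1}$ (taking the $B_j$ nondecreasing, $B_0:=0$). Summing $\sum_{i<p}c_r^{\,p-1-i}\le\frac{1}{1-c_r}$ gives $\norm{v_R}\le(B_{r-1}+\beta)\bigl(1+\tfrac{1}{1-c_r}\bigr)=:B_r$, which — crucially — depends on neither $N$ nor the number of blocks $p$.

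The main obstacle is exactly this uniformity: $R$ ranges over sequences of unbounded length, so neither the trivial estimate $\norm{v_R}\le\sum_j|\hat b_{i_j}|$ nor a single per-$R$ contraction bound is admissible, and a product of a fixed finite family of projections need not contract at all in general. The two devices above — discarding nonexpansive tails, and partitioning a sweep into geometrically damped spanning blocks plus a low-dimensional remainder — are what convert finiteness of the row set (which makes every local configuration nondegenerate) into the uniform transversality $s_{\min}>0$; the induction on $\dim\opspan R$ then closes both estimates, and this is also the only place where finiteness of the problem is essential.
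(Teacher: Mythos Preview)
Your argument is correct and follows essentially the same architecture as the paper's proof: induction on $r=\dim\opspan R$, the greedy decomposition of $R$ into minimal $V$-spanning blocks plus a lower-dimensional tail, and the appeal to finiteness of the row set to make the angle bound uniform. The one noteworthy difference is that for the contraction estimate on a single block $R'\cdot(a)$ you produce the explicit constant $c_r=\sqrt{1-s_{\min}^{2}(1-c_{r-1}^{2})}$ via the identity $\norm{Q_a w}^2=\norm{w}^2-\langle\hat a,w\rangle^2$, whereas the paper combines two separate (less explicit) upper bounds on $\norm{ATv}$ and then invokes finiteness of the pairs $(\calS,\ell_{a_i})$ to conclude only that some $c'<1$ exists; your route is a bit sharper and more self-contained, but otherwise the two proofs coincide.
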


\begin{proof}[Proof of Lemma~\ref{lem:bdd_op_norms}]

We proceed by induction on $r$.  The Kaczmarz update for the $\ell$-th row is
\[K_\ell : v \mapsto \left(I - \frac{1}{\normsq{M_\ell}}M_\ell^T M_\ell\right)v + \frac{b_\ell}{\normsq{M_\ell}}M_\ell^T\]

If $r=1$, then $T_R$ is the zero operator restricted to $\opspan R$.  We can take $c_1 = 0$ and $B_1 = \max_\ell\frac{|b_\ell|}{\norm{M_\ell}}$.

Now, assume the lemma is true for all $r < r'$.  Let $R = (M_{\ell_1}, \ldots, M_{\ell_k})$ be a sequence of rows where $\dim\opspan R = r'$.  We can group
\begin{align*}K_{\ell_k} \circ \cdots \circ K_{\ell_1} =&
(K_{\ell_k} \circ \cdots \circ K_{\ell_{a_N}}) \\
&\circ
(K_{\ell_{a_N-1}}\circ\cdots\circ K_{\ell_{a_{N-1}}})\\
&~\vdots\\
&\circ(K_{\ell_{a_2-1}}\circ\cdots\circ K_{\ell_{a_1}})\\
&\circ (K_{\ell_{a_1-1}}\circ\cdots\circ K_{\ell_{a_0}})
\end{align*}
so that $a_0 = 1$ and $a_i$ for $i\in\{0,\ldots,N\}$ is a strictly increasing sequence where the following are true:
\begin{align*}
    \dim\opspan\{M_{\ell_{a_{i-1}}}, \ldots, M_{\ell_{a_{i}-1}}\} &= r'&\forall i \in \{1,\ldots,N\} \\
    \dim\opspan\{M_{\ell_{a_{i-1}}}, \ldots, M_{\ell_{a_{i}-2}}\} &= r'-1&\forall i \in \{1,\ldots,N\}\\
    \dim\opspan\{M_{\ell_{a_n}},\ldots,M_{\ell_k}\} &< r'.
\end{align*}
Consider a grouping $(K_{\ell_{a_i} -1} \circ \cdots \circ K_{\ell_{a_{i-1}}})$, the linear part of the transformation is $A = \left(I - \frac{1}{\normsq{M_{\ell_{a_i}}}}M_{\ell_{a_i}}^T M_{\ell_{a_i}}\right)$ composed with an operator $T$ that sends $\calS = \opspan\{M_{\ell_{a_{i-1}}}, \ldots, M_{\ell_{a_{i}-2}}\}$ to itself and has operator norm less than $c_{r'-1}$ on this space.  Consider a unit vector $v\in\opspan R$.  We can decompose $v = v_1 + v_2$ with $v_1 \in \calS$ and $v_2 \in \calS^\perp \cap \opspan R$.  This allows us to bound \[\norm{ATv} \leq \norm{Tv} \leq \sqrt{c_{r'-1}^2\normsq{v_1} + \normsq{v_2}}\]
using that the operator norm of $A$ is at most 1 and that $T$ is the identity on $\calS^\perp$.
Another bound is
\[\norm{ATv} \leq \norm{ATv_1} + \norm{Av_2} \leq \norm{v_1} + \left(1 - \frac{|M_{\ell_{a_i}}v_2|}{\norm{M_{\ell_{a_i}}}}\right)\]
obtained with the triangle inequality and again $T$ being the identity on $\calS^\perp$.

These bounds combine to give a bound for the $\ell^2$ operator norm of $AT$ on $\opspan R$ that depends only on $\calS$ and $\ell_{a_i}$.  There are finitely many possible choices for $\calS$ and $\ell_{a_i}$, so there is some bound $c' < 1$ on the operator norm of $AT$ independent of what $\calS$ and $\ell_{a_i}$ are.  Next we turn to the affine part.  By the induction hypothesis, this is a vector with norm at most $B' = B_{r'-1} + \max_\ell\frac{|b_\ell|}{\norm{M_\ell}}$.

Next we turn to the last grouping, which is not of this form.  We will not analyze the linear part, and note the affine part $v'$ is bounded above in norm by $B'' = \max_{r < r'} B_r$.

Since all linear operators here have operator norm at most 1, a final bound for the operator norm of $T_R$ restricted to $\opspan R$ is $c'$, which we can take to be $c_{r'}$.  We bound
\[\norm{v_R} \leq B'' + \left[\sum_{i=1}^\ell c_{r'}^{i-1} B'\right] \leq B'' + \frac{1}{1-c_{r'}} B'.\]
So we can let $B_{r'}$ be this bound.
\end{proof}

\begin{proof}[Proof of Theorem~\ref{thm:almost_sure}]
We first bound the norm of the iterates above by some constant $D$.

Consider the evolution of a single iterate $x_k^{(i)}$ after finitely many steps.  At the last update for $x_k^{(i)}$, we perform a Kaczmarz update with respect to some system, and move the iterate towards, but not past, the update.  There is a line segment of possible choices for the next update once we have selected the row.  The potential next iterate with largest norm is one of the end points of the line segment, corresponding to either doing a full update or no update at all.  So we can either remove this last update or replace it with a full update to yield a final iterate with norm at least as large.  We repeat with each update in reverse order, noting the image of a line segment after a series of affine transformations is still a line segment, choosing the one that will yield the largest norm at the end.  Hence, to bound the norm of the iterates, we only need to consider sequences where we only ever make full Kaczmarz updates.

Using Lemma~\ref{lem:bdd_op_norms}, we see that if the initial norm of the iterates are bounded above by $A$, after a sequence of rows $R$ the norm is at most $c_{\dim\opspan R} A + B_{\dim\opspan R}$, which is bounded above by $D = A + \max_{r \in \{1,\ldots,d\}} B_r$.

As given by Theorem~\ref{thm:cond_exp_conv}, let $\eps$ be such that if $e_k \leq \eps$, we converge with some positive probability $t$.

Let $C$ be the set of all possible iterates such that $\tfrac{\eps}{n+1} \leq \normsq{x_k^{(i)}-x_*^{(i)}} \leq D^2$ for all $i \in \{0,\ldots,n\}$ with $D$ given above.  This set is compact.  If our iterates do not lie in $C$, then already $e_k < \eps$, so we only need to look at what happens if our iterates lie in $C$.

Define
\[g(x^{(0)},\ldots,x^{(n)}) = \max_{\ell \in \{1,\ldots,m\}}\min_{i \in \{0,\ldots,n\}} \frac{
|M_\ell x^{(i)}-b_\ell|}{\norm{M_\ell}}.\]
This function is the norm of the largest possible iterates in our algorithm if the iterates currently take the values $x^{(0)},\ldots,x^{(n)}$.  This is a continuous function on $\R^{d(n+1)}$, so it achieves a minimum $c$ on $C$.  This minimum $c$ is positive, as by our assumption $g$ cannot be zero anywhere on $C$.

Now, for all value of iterates, we have probability at least $\frac{1}{m}$ of choosing a row where we will make an update with norm at least $c$.  The probability of updating the correct system with a chosen row is at least $\frac{r}{n+1}$.  The squared error of the corresponding iterate decreases by \emph{at least} the norm squared of the update, which is at least $c^2$, because the resulting triangle between the previous iterate, next iterate, and solution is obtuse.  We can keep doing this as long as our iterates remain in $C$. Therefore, if we make no more than
\[A = (n+1)\left\lceil\frac{N-\tfrac{\eps}{n+1}}{c^2}\right\rceil\]
of these updates, we have $e_k < \eps$.  The probability of this happening is
\[{\left(\frac{r}{m(n+1)}\right)}^B,\]
where $m$ is the number of rows of $M$.  This is a fixed positive value independent of the iterate.
\end{proof}

\section{Experimental Results}
We test the MRK method on synthetic and real world data to verify the merits of the method. First we construct a problem in two-dimensional space and visualize how our iterates move in space in \Cref{fig:2rk_spatial_plot}. From \Cref{fig:2rk_spatial_plot} we see how the iterates converge to solutions, moving closer with each projection. The problem is defined by two $10 \times 2$ systems where $M_1 \in \R^{10 \times 2}$ with entries drawn i.i.d. from $\mathcal{N}(0.8,0.3)$ and $M_2 \in \R^{10 \times 2}$ with entries drawn i.i.d. from $\mathcal{N}(-0.8,0.3)$. 

\begin{figure}
    \centering
    \includegraphics[width= .95\linewidth]{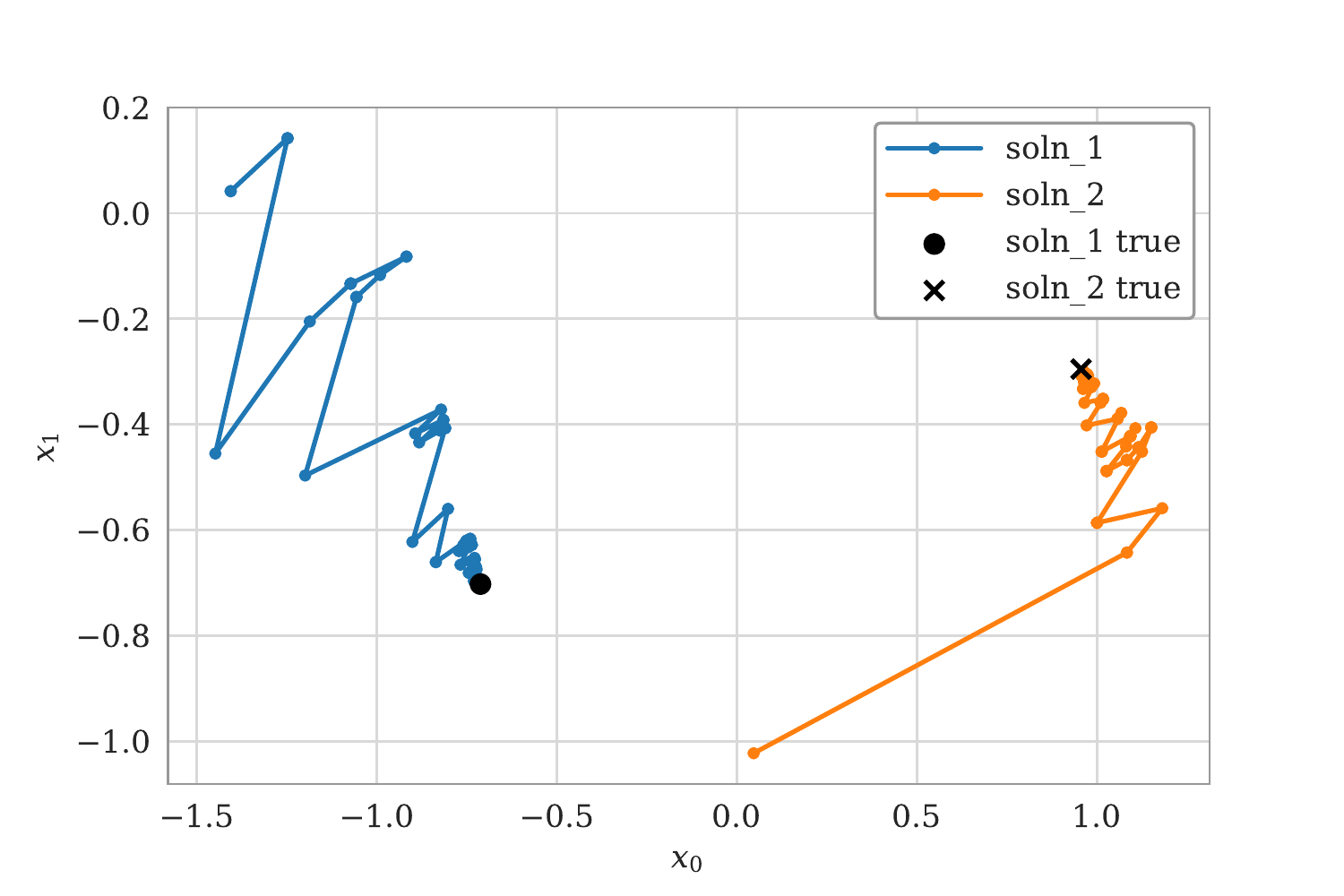}
    \caption{Here we plot the evolution of two iterates in the two-dimensional plane. Our system is defined by two matrices $M_1 \in \R^{10 \times 2}$ with entries drawn i.i.d. from $\mathcal{N}(0.8,0.3)$ and $M_2 \in \R^{10 \times 2}$ with entries drawn i.i.d. from $\mathcal{N}(-0.8,0.3)$. Each initial iterate $x_0^0, x_0^1 \sim \mathcal{N}(0,1)$. We let our swap probability $r=0$ and sample rows uniformly at random.}
    \label{fig:2rk_spatial_plot}
\end{figure}

Next, in \Cref{fig:2rk_convergence} we use the MRK method on a large synthetic data set. We plot the log norm squared error per iteration of the two iterates for the two class system. The system is defined by two matrices $M_1 \in \R^{1000 \times 10},M_2 \in \R^{1000 \times 10}$ where the matrices have entries drawn i.i.d. from $\mathcal{N}(0,1)$. Each initial iterate $x_0^0, x_0^1 \sim \mathcal{N}(0,1)$ with zero swap probability. We plot the median and shade the interquartile range in \Cref{fig:2rk_convergence}. We observe that both iterates converge to machine precision and the method succeeds at solving both systems simultaneously.
\begin{figure}
    \centering
    \includegraphics[width=0.9\linewidth]{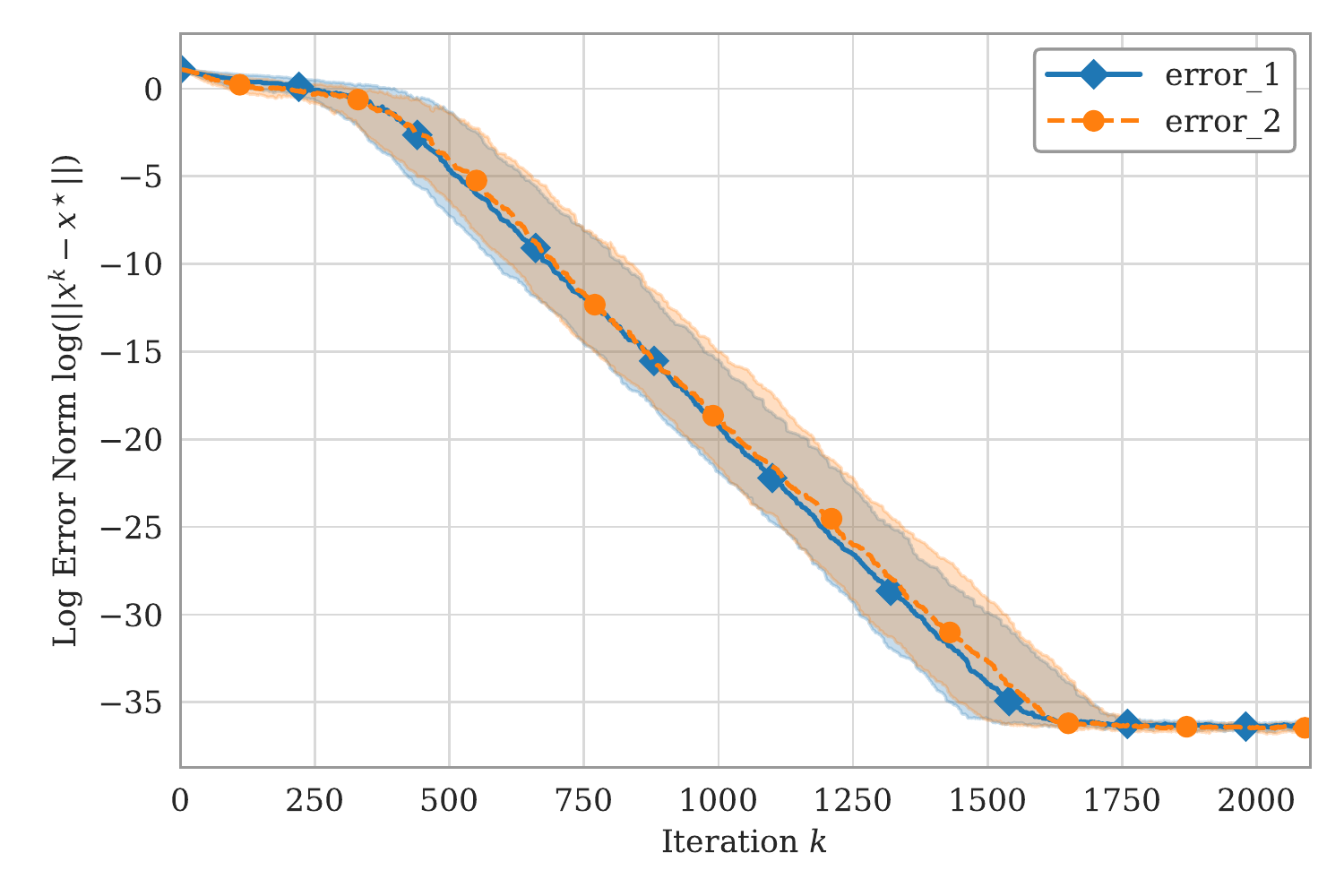}
    \caption{System defined by two matrices $M_1 \in \R^{1000 \times 10},M_2 \in \R^{1000 \times 10}$ where the matrices have entries distributed $M_1, M_2 \sim \mathcal{N}(0,1)$. Each initial iterate $x_0^0, x_0^1 \sim \mathcal{N}(0,1)$. We let our swap probability $r=0$, sample rows uniformly at random and plot the median and interquartile range over the 100 trials.
    }
    \label{fig:2rk_convergence}
\end{figure}

Finally, in \Cref{fig:2rk_convergence_wisc} we define a two problem system for the MRK method using real world data \cite{wolberg1995breast}. We construct two systems defined by matrices $M_1 \in \R^{300 \times 10},M_2 \in \R^{399 \times 10}$ submatrices of the Wisconsin breast cancer data set. We start two initial iterates with standard normal entries and let our swap probability be zero. We observe that in \Cref{fig:2rk_convergence_wisc} both iterates converge to their respective solutions. We plot the median and interquartile range for the iterates' log error norm per iteration over 100 trials and observe that the method is able to solve this two system problem. 

\begin{figure}
    \centering
    \includegraphics[width=0.9\linewidth]{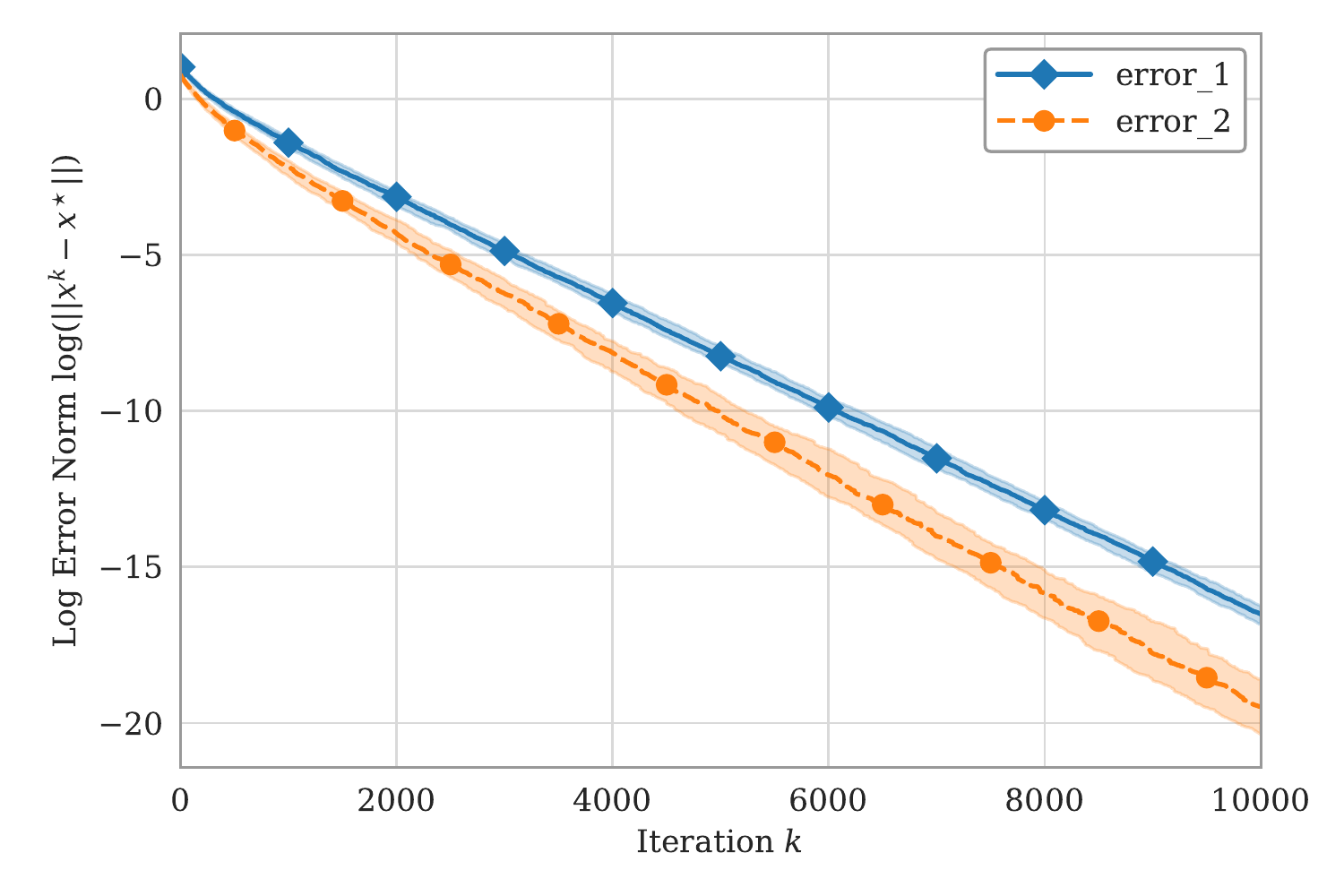}
    \caption{System defined by matrices $M_1 \in \R^{300 \times 10},M_2 \in \R^{399 \times 10}$ submatrices of the Wisconsin breast cancer data set. Each initial iterate $x_0^0, x_0^1 \sim \mathcal{N}(0,1)$. We let our swap probability $r=0$, sample rows uniformly at random and plot the median and interquartile range over the 100 trials.
    }
    \label{fig:2rk_convergence_wisc}
\end{figure}

\section{Conclusion and Future Work}
In this paper we introduce the novel multi-randomized Kaczmarz algorithm, \Cref{alg:mrk}, to solve the consistent latent class regression problem. We prove linear convergence for the algorithm in expectation with high probability under some constraints in \Cref{thm:cond_exp_conv} and almost surely in \Cref{thm:almost_sure}. Additionally, we observe promising results when applying the algorithm to test data sets. We plan on extending this work to inconsistent and noisy systems by leveraging the extended Kaczmarz method \cite{zouzias2013randomized}, which converges to the least squares solution \cite{needell2010randomized}. Additionally, we would like to explore using Kaczmarz variants such as max distance \cite{nutini2016convergence}, sampling Kaczmarz-Motzkin \cite{de2017sampling} and selectable set \cite{yaniv2022selectable} methods in this setting. Finally, we are interested in adaptively marking and assigning which rows belong to which system in real time based on the iterate projection values.

\bibliographystyle{IEEEtran}
\bibliography{IEEEabrv,ref}

\end{document}